\documentclass[12pt]{amsart}

\usepackage{amssymb}

\usepackage{anysize}
\marginsize{2cm}{2cm}{2cm}{2cm}

\usepackage[pdftex]{graphicx}

\frenchspacing

\usepackage{versions}
\usepackage{hyperref}
\hypersetup{
  colorlinks   = true, 
  urlcolor     = blue, 
  linkcolor    = blue, 
  citecolor   = red 
}
\usepackage{cancel}
\usepackage{yhmath}
\usepackage{enumitem}
\usepackage{stackengine}

\usepackage{lineno}


\newtheorem{theorem}{Theorem}[section]
\newtheorem{proposition}[theorem]{Proposition}
\newtheorem{lemma}[theorem]{Lemma}

\theoremstyle{definition}
\newtheorem{definition}[theorem]{Definition}

\theoremstyle{remark}
\newtheorem{remark}[theorem]{Remark}

\numberwithin{equation}{section}


\newcommand*{\where}{\ \ifnum\currentgrouptype=16 \middle\fi|\ }

\renewcommand{\epsilon}{\varepsilon}
\renewcommand{\phi}{\varphi}
\renewcommand{\kappa}{\varkappa}
\renewcommand{\theta}{\vartheta}

\def\Z{{\mathbb Z}}

\title[Cubulating the sphere]{Cubulating the sphere with many facets}

\author{Sergey Avvakumov}
\author{Alfredo Hubard}

\address{Sergey~Avvakumov, School of Mathematical Sciences, Tel Aviv University, Israel}
\email{savvakumov@gmail.com}

\address{Alfredo~Hubard, Universit\'e Gustave Eiffel, CNRS, LIGM, F-77454 Marne-la-Vall\'ee, France}
\email{alfredo.hubard@univ-eiffel.fr}

\begin{document}

\maketitle

\begin{abstract}
For each $d\geq 3$ we construct cube complexes homeomorphic to the $d$-sphere with $n$ vertices in which the number of facets (assuming $d$ constant) is $\Omega(n^{5/4})$. 

This disproves a conjecture of Kalai's stating that the number of faces (of all dimensions) of cubical spheres is maximized by the boundaries of neighbourly cubical polytopes. The conjecture was already known to be false for $d=3$, $n=64$. Our construction disproves it for all $d\geq 3$ and $n$ sufficiently large. Moreover, since neighborly cubical polytopes have roughly $n (\log n)^{d/2}$ facets, we show that even the order of growth (at least for the number of facets) in the conjecture is wrong.

\end{abstract}

\section{Introduction.}
 Face numbers of PL simpicial triangulations of the $d$-sphere \cite{adiprasito2019}, as well as related combinatorial objects (e.g. \cite{kalai2002}) are remarkably well understood. But outside the realm of simplicial complexes we know very little about face numbers. In this paper we focus on face numbers of cube complexes. By a \emph{cube complex} we mean a union of topological cubes such that the intersection of any two cubes is a face of both, a \emph{cubulation} of a topological space is a cube complex homeomorphic to the space, and a \emph{cubical polytope} is a convex $(d+1)$-polytope such that its $d$-skeleton is a cube complex.

Adin \cite{adin1996new} proved that cubical polytopes satisfy analogues of Dehn--Sommerville relations for simplical polytopes. He also asked whether, as in the the simplicial case, cubical $g$-vectors are always non-negative; Babson, Billera, and Chan  \cite{babson1997neighborly} conjectured further that the closed cone spanned by $g$-vectors of cubical polytopes coincides with the positive orthant. Adin,  Kalmanovich, and Nevo \cite{adin2019cone} proved one direction of this conjecture, that the cone contains the orthant. 

In the 90s, inspired by McMullen's upper bound theorem, Kalai conjectured an upper bound theorem for cube complexes homeomorphic to the sphere (Conjecture 4.2 in \cite{babson1997}), specifically, that among cubulations of the $d$-dimensional sphere $S^d$, with $n=2^k$ vertices, for each $i\leq d$, the ones that maximize the number of $i$-th faces, are those whose $\lfloor \frac{d}{2} \rfloor$-skeleton coincides with that of a $k$-dimensional cube.  Cube complexes satisfying this skeleton condition are called (cube) \emph{neighbourly}. Neighbourly cubulations of the sphere for different values of $n$ were first constructed by  Babson, Billera and Chan \cite{babson1997}. In particular, if $n$ is the number of vertices of such a cubulation of the sphere, then the number of facets is $\Omega(n (\log n)^{d/2})$.
Neighbourly cubic polytopes were constructed by Joswig and Ziegler \cite{joswig1999neighborly}, who also showed that a local modification  (flipping a cube) of one of these polytopes provides a $3$-dimensional counterexample to the aforementioned conjecture of Kalai. Our main result is that the order of growth of facets in sphere cubulations can be much larger than that of neighborly cube complexes.

\begin{theorem}
\label{theorem:main}
For any $d\geq 3$ there exists a constant $c(d)>0$ such that for any $n\geq 2^{d+1}$ there exists a cubulation of the $d$-sphere with at most $n$ vertices and at least $c(d)\cdot n^{5/4}$ facets.
\end{theorem}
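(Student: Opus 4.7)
The plan is to produce an explicit cubulation of $S^d$ satisfying the claimed bound, with the main work concentrated in dimension $d=3$ and a routine product-and-cap bootstrap handling higher $d$.

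For $d=3$, note that the Euler relation for a cubulation of $S^3$, combined with the closed-manifold identity $2f=6g$ (each $2$-cube lies on exactly two $3$-cubes), gives
\[
g \;=\; \frac{e-v}{2},
\]
where $v$, $e$, $f$, $g$ are the numbers of vertices, edges, $2$-cubes and $3$-cubes. Hence it suffices to construct a cubulation of $S^3$ whose $1$-skeleton has $\Omega(n^{5/4})$ edges, i.e., whose vertices have average valence $\Omega(n^{1/4})$. One natural way to engineer this is by an explicit incidence-style arrangement: place $n$ vertices in $\mathbb{R}^3$ in a configuration supporting $\Omega(n^{5/4})$ axis-aligned $3$-cubes with corners among them, analogous to classical super-linear constructions in combinatorial geometry (unit-distance/lattice-cube incidences). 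After refining so that intersections of cubes are common faces (the strict cube-complex condition), one obtains a cubulation of a $3$-ball whose boundary is a cubical $2$-sphere with $O(n)$ vertices; capping the boundary by an efficient cubulation of the complementary $3$-disk using only $O(n)$ additional vertices yields the desired cubulation of $S^3$.

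For $d\geq 4$, take the cubulation $K$ of $S^3$ just constructed and form $K \times I^{d-3}$, a cubulation of $S^3 \times I^{d-3}$ whose vertex and facet counts are both those of $K$ multiplied by $2^{d-3}$. Realize $S^3 \times I^{d-3}$ as a tubular neighborhood of a standard embedded $S^3 \subset S^d$, and cap the complementary region $D^4 \times S^{d-4}$ by any efficient cubulation (built inductively from cubes and their products). Each cap contributes only $O(|V(K)|)$ new vertices and $O(|V(K)|)$ new facets, so the ratio $f/v^{5/4}$ is preserved up to a dimension-dependent constant.

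The main obstacle is the base case, where two constraints must be balanced simultaneously: (i) the dense cube arrangement in $\mathbb{R}^3$ must be a strict cube complex, and in particular the link of each vertex must be a triangulated $2$-sphere, which severely restricts how cubes can be packed on a fixed vertex set; and (ii) the boundary cap on the $3$-ball must use at most $O(n)$ new vertices — any $n^{1/4+\varepsilon}$ overhead would wipe out the target exponent. Controlling both constraints at once, most likely via a carefully chosen combinatorial configuration whose boundary is small and tame, is the crux of the argument. The bootstrap to $d>3$ is then essentially formal, requiring only that the product-and-cap procedure be executed without inflating the vertex count.
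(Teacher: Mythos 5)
The higher-dimensional bootstrap is in the same spirit as the paper's (the paper does a $d\to d+1$ induction by sandwiching a cylinder $\partial(Q\times I)\times I$ between two copies of $Q\times I$, which is morally your product-and-cap scheme), and your Euler-count reduction of ``many $3$-cubes'' to ``many edges'' in a cubical $S^3$ is a valid, nice observation. But the base case has a genuine gap, and it is exactly the part you yourself flag as ``the crux.'' You gesture at an incidence-style arrangement of axis-aligned cubes in $\mathbb{R}^3$ plus an unspecified refinement plus an $O(n)$-vertex cap, but none of these three steps is carried out, and the middle one is where the substance lies: turning a dense family of overlapping cubes into a PL cube complex in which every pairwise intersection is a common face is precisely what destroys face counts in naive constructions. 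There is no known arrangement of $n$ points in $\mathbb{R}^3$ spanning $\Omega(n^{5/4})$ axis-aligned cubes that simultaneously forms a cube complex whose boundary is an $O(n)$-vertex cubical $2$-sphere, and you don't supply one, so the argument as written does not go through.

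The paper circumvents this entirely with a different geometric idea: a Heegaard-style decomposition of $S^3$. It first builds a cubulation $Q$ of a closed orientable surface $M$ with $O(n)$ vertices and $\Omega(n^2)$ squares (this is Lemma~\ref{lemma:n-square-surface}, built from a bipartite graph embedded in a high-genus surface and cubulated carefully via Lemma~\ref{lemma:cycles}). Then it takes $Q\times I_k$, a cubulation of $M\times[a,b]$ with a path of length $k$, contributing $\Theta(kn)$ vertices and $\Theta(kn^2)$ cubes, and caps the two ends by cubulated handlebodies using only $O(n^4)$ vertices \emph{independent of $k$} (via the ``refining cylinder'' and Eppstein's Lemma~\ref{lemma:fill-sphere}). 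Choosing $k=n^3$ gives $O(n^4)$ vertices and $\Omega(n^5)$ cubes, i.e., exponent $5/4$. The essential trick you are missing is that the construction decouples two scales: the boundary/cap cost depends only on $n$ (the surface size), while the facet count grows with the free parameter $k$. Your proposal keeps everything at one scale in $\mathbb{R}^3$, which is why you end up stuck on a hard incidence problem with no resolution.
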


For $d=3$, the construction behind Theorem~\ref{theorem:main} is as follows: we start with a cubulation of a closed orientable $2$-surface with $O(n)$ vertices and $\Omega(n^2)$ squares, Lemma~\ref{lemma:n-square-surface}. We then take the cartesian product of the surface with a path of length $k$ and fill the ends of the obtained cylinder with two handlebodies getting a cubulation of $S^3$. The key observation, is that the number of vertices introduced in the handlebodies is polynomial, $O(n^4)$, in $n$, and is independent of $k$. Taking $k=n^3$ we get the required number of vertices and facets. The case $d>3$ of the theorem easily follows from $d=3$.

In the other direction, observe that the number of facets in a $d$ dimensional cube complex of $n$ vertices is at most $\frac{n(n-1)}{2^d}$. Indeed, in a cube complex no two cubes share a diagonal, which is defined by a pair of vertices, and a $d$-cube has $2^{d-1}$-diagonals, which yields the previous bound. If $d=3$, and the cube complex is a pseudo-manifold one can play around with the aforementioned observation, Euler's formula and the pseudo-manifold condition to obtain an upper bound of $\frac{n^2}{24}$ cubes. A subquadratic upper bound remains a challenge, even if we add some natural condition like shellability or convexity. 

The following example is a warm up to our main result and implies that if there is a subquadratic upper bound theorem for cubulations of the sphere, a hypothesis stronger than simply connectedness is required. 

\begin{proposition}
\label{warm-up}
For every dimension $d\geq 2$, there exists a constant $c(d)>0$ such that for any $n\geq 2^d$ there exists a simply connected $d$-dimensional cubical complex with at most $n$ vertices and at least $c(d)\cdot n^2$ facets.
\end{proposition}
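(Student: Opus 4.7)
\emph{Proof plan.} The strategy is to establish the case $d=2$ directly from a high-genus surface, and then lift to arbitrary $d \geq 3$ by taking a Cartesian product with $[0,1]^{d-2}$.

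For $d=2$, start with the cubulation $\Sigma$ of a closed orientable surface provided by Lemma~\ref{lemma:n-square-surface}, with $V = O(n)$ vertices and $F = \Omega(n^2)$ squares. Since $\Sigma$ is a closed surface, Euler's formula $V - E + F = 2 - 2g$ together with the identity $E = 2F$ forces the genus $g$ to be $\Omega(n^2)$, so $\pi_1(\Sigma)$ has rank $2g$ and $\Sigma$ is very far from being simply connected. To kill $\pi_1$ without adding many vertices, I would attach one extra square along each loop in a generating set of $\pi_1(\Sigma)$ that can be represented by a $4$-cycle in the $1$-skeleton of $\Sigma$; such an attachment kills the class of the loop in $\pi_1$ and introduces no new vertices or edges. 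After $2g = \Omega(n^2)$ such attachments, $\pi_1$ is trivial, while the number of squares remains $\Omega(n^2)$ and the vertex count remains $O(n)$.

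For $d \geq 3$, take the simply connected $2$-dimensional cube complex $X$ just produced on $n_2 \le n/2^{d-2}$ vertices, and form $Y = X \times [0,1]^{d-2}$. The product is again a cube complex; it is simply connected as a Cartesian product of simply connected factors; it has $n_2 \cdot 2^{d-2} \leq n$ vertices; and its $d$-facets are exactly the products of the squares of $X$ with the single top-dimensional cube $[0,1]^{d-2}$, so $Y$ has $\Omega(n_2^2) \geq c(d) \, n^2$ facets for some $c(d) > 0$ depending only on $d$.

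The main obstacle lies in the base case. One must arrange the cubulation $\Sigma$ of Lemma~\ref{lemma:n-square-surface} (or modify it slightly) so that (i) $\pi_1(\Sigma)$ admits a generating set representable by essential $4$-cycles in its $1$-skeleton, and (ii) the newly added squares do not violate the cube complex axiom that no pair of vertices serves as a diagonal of two distinct cubes. Both conditions reduce to a combinatorial design problem about the chosen cubulation; they can be arranged either by choosing the cubulation carefully from the outset, or by spending $O(n)$ auxiliary vertices to create enough free $4$-cycles with free diagonal pairs, which does not affect the asymptotic bounds.
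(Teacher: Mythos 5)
Your construction departs substantially from the paper's: you start from the high-genus surface cubulation $\Sigma$ of Lemma~\ref{lemma:n-square-surface} and try to kill $\pi_1$ by attaching squares along $4$-cycles, whereas the paper instead takes $K_{m,m}\times K_{m,m}$ (also quadratically many squares, but with the crucial feature that its fundamental group is carried by the small subcomplex $\{v\}\times K_{m,m}\cup K_{m,m}\times\{v\}$ of only $O(\sqrt n)$ vertices), cones over that subcomplex with two apexes, and then subdivides the cones into squares. The paper's choice is what makes the vertex count stay at $O(n)$: a cone over an $O(\sqrt n)$-vertex graph contributes only $O(n)$ new cells.

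The step in your plan that does not go through is the base case. The surface $\Sigma$ has genus $g=\Omega(n^2)$, so $\pi_1(\Sigma)$ has rank $\Omega(n^2)$, and to trivialize it by attaching $2$-cells you need $\Omega(n^2)$ loops that together normally generate $\pi_1(\Sigma)$. You restrict to attaching squares, hence to loops that are $4$-cycles in the $1$-skeleton, but you give no argument that $\pi_1(\Sigma)$ is generated by such short loops --- in the paper's own treatment, the explicit homology basis of Lemma~\ref{lemma:short-basis} consists of curves of length up to $\Theta(n^2)$, far from length $4$. The fallback you offer (``spend $O(n)$ auxiliary vertices to create enough free $4$-cycles'') does not address this either: adding new vertices and edges to create $4$-cycles does not make those cycles essential in $\pi_1$, and attaching squares along nullhomotopic cycles kills nothing. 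Without a mechanism that actually produces $\Omega(n^2)$ normally generating $4$-cycles compatible with the no-shared-diagonal condition, the ``obstacle'' you name is a genuine gap rather than a routine design issue. (Your $d\ge 3$ reduction via the product with $[0,1]^{d-2}$ is the same as the paper's and is fine.)
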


\begin{proof}
We first consider the case $d=2$.
Let $K_{m,m}$ be the bipartite graph with $m$ vertices on each side. Consider the cartesian product $K_{m,m}\times K_{m,m}$, it is a cube complex with $4m^2$ vertices and $m^4$ squares.

Pick any vertex $v$ in $K_{m.m}$. There is a basis of $\pi_1(K_{m,m}\times K_{m,m})$ with all of its loops in $\{v\} \times K_{m,m} \cup K_{m,m}\times \{v\}$.
Add a cone $u_1 * (\{v\} \times K_{m,m})$ over $\{v\} \times K_{m,m}$ with an apex $u_1$ to the complex. Likewise, add another cone $u_2 * (K_{m,m}\times \{v\})$ over $K_{m,m}\times \{v\}$ with an apex $u_2$. The obtained space is simply connected, it remains to subdivide the cones to make it into a complex.

Let us describe the subdivision of the cone $u_1 * (\{v\} \times K_{m,m})$, the other cone is subdivided analogously. The graph $\{v\} \times K_{m,m}$ is bipartite. Pick one of its components and split every edge between the vertices of this component and $u_1$ in two. Now every triangle in $u_1 * (\{v\} \times K_{m,m})$ becomes a square, however, some of the squares share two edges. To fix this subdivide every square in five, by adding one square in its center and four more for each side, see Figure~\ref{warm-up}.

The resulting cube complex $K$ has $12m^2 + 2$ vertices and $m^4+10m^2$ squares.

For $d > 2$ take the cartesian product $K\times [0,1]^{d-2}$. It has $2^{d-2}\cdot (12m^2 + 2)$ vertices and $m^4+10m^2$ facets.

\end{proof}

    \begin{figure}[ht]
	\center
	\includegraphics[width=0.33\linewidth]{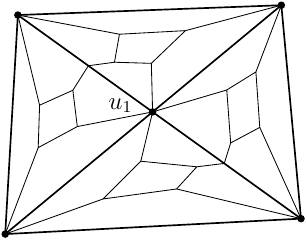}
	\caption{Subdivision of the cone $u_1 * (\{v\} \times K_{m,m})$ for $m=2$.}
	\label{figure:warm-up}
	\end{figure}

\subsection*{Acknowledgments} We thank Roman Karasev and Arnaud de Mesmay for helpful discussions.

\section{Proof of the main result.}

\subsection{Proof of the case $d>3$ of Theorem~\ref{theorem:main} assuming the case $d=3$.}

The proof proceeds by induction with its base being the case $d=3$. Start with a many facet cubulation of $S^d$ given by induction and remove the interior of a facet. We get a cubulation of $B^d$ which we call $Q$. It has $O(n)$ vertices and $\Omega(n^{5/4})$ facets.

The product $Q\times I$ cubulates $B^{d+1}$, where $I$ is the segment cubulated with a single edge. At the same time, $\partial(Q\times I)\times I$ cubulates the cylinder $S^d\times I$.

Glue one copy of $Q\times I$ to each boundary component of $\partial(Q\times I)\times I$ using the identity maps. The resulting space is $S^{d+1}$. It is a cubulation, because each cube of the cylinder $\partial(Q\times I)\times I$ intersects each end of the cylinder at exactly one facet. And each cube of $Q\times I$ intersects each facet of $\partial(Q\times I)$ on a face.

Compared to the cubulation of $S^d$ which we started with, this cubulation of $S^{d+1}$ has exactly $4$ times as many vertices and at least $2$ times as many facets.

It remains to prove the case $d=3$ of Theorem~\ref{theorem:main}.

\subsection{Required lemmas.}

We are going to use the following subdivisions, see Figure~\ref{figure:constant-subdivisions}.
The first is obtained by inserting a square, the second by inserting a cube. The third is a sudivision of a square in an even number (ten) of squares.

	\begin{figure}[ht]
	\center
	\includegraphics[width=0.8\linewidth]{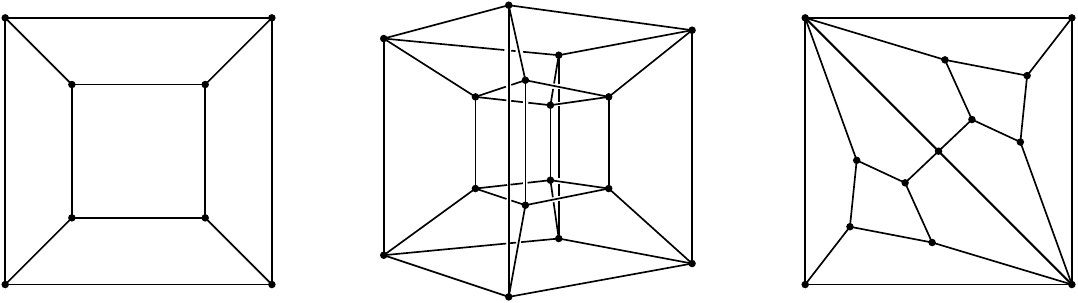}
	\caption{Inserting a square, insetting a cube, a subdivison of a square in ten squares.}
	\label{figure:constant-subdivisions}
	\end{figure}

We postpone the proof of the following lemma until the next section:

\begin{lemma}
\label{lemma:n-square-surface}
For any $n$ there is a cubulation of a closed orientable $2$-surface with $O(n)$ vertices and $\Omega(n^2)$ squares.
\end{lemma}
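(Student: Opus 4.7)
The plan is to realize the cubulation as an orientable quadrangular embedding of a bipartite graph on $O(n)$ vertices. By Euler's formula, a polyhedral quadrangulation of a closed surface with $2n$ vertices and $\Theta(n^2)$ edges is forced to have $\Theta(n^2)$ square faces and genus $\Theta(n^2)$, which already meets the lemma's requirements.

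I would first write down an explicit candidate. Label the two vertex classes $\{a_i\}_{i\in\Z_n}$ and $\{b_j\}_{j\in\Z_n}$, take all $n^2$ edges $a_ib_j$, and, as 2-cells, the squares $F_{i,j}=a_ib_ja_{i+1}b_{j+1}$ for $(i,j)\in\Z_n^2$ with $i+j$ even. A direct verification shows that each edge is in exactly two squares, that the link of every vertex is a single $n$-cycle, and that the dual adjacency graph on the squares is bipartite (squares with both coordinates even are adjacent only to squares with both coordinates odd). Hence this is already a closed orientable 2-manifold of genus $(n-2)^2/4$ with $2n$ vertices and $n^2/2$ squares, with the correct asymptotic counts.

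The main obstacle is the cube complex condition, which fails in the model above because $F_{i,j}$ and $F_{i,j+2}$ share the non-edge pair $\{a_i,a_{i+1}\}$ as a common pair of diagonally opposite corners. To repair polyhedrality while keeping the vertex count linear in $n$, I would reparametrize the squares so that each unordered same-side pair of vertices is a diagonal of at most one square. This restricts the 1-skeleton to $K_{n,n}$ minus a perfect matching and the square count to $\binom{n}{2}$, which is still $\Theta(n^2)$. Concretely, fix a bijection $\phi:\binom{[n]}{2}\to\binom{[n]}{2}$ with $\phi(P)\cap P=\emptyset$ for all $P$ (such a $\phi$ exists for $n\ge 4$ by K\"onig's theorem applied to the regular ``disjoint-pair'' bipartite graph), and take one square $F_P=a_k\,b_{m_1}\,a_\ell\,b_{m_2}$ for each pair $P=\{k,\ell\}$, where $\{m_1,m_2\}=\phi(P)$.

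The remaining and most technical step is to choose $\phi$ so that each edge lies in exactly two squares and the link of every vertex is a single cycle, so that the resulting cube complex is a closed orientable 2-manifold. A direct double-count shows the necessary numerics are consistent, and the single-cycle link condition can be enforced by a suitable cyclic combinatorial design on $\Z_n$; alternatively, one can invoke the classical Ringel--Jungerman existence theorems for minimum-genus orientable quadrangular embeddings of complete bipartite graphs (minus a matching), padded with $O(1)$ auxiliary vertices to adjust specific parities of $n$. Together with the manifold and orientability checks above, this yields a cubulation of a closed orientable surface of genus $\Theta(n^2)$ with $O(n)$ vertices and $\Omega(n^2)$ squares, as required.
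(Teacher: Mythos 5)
Your route is genuinely different from the paper's, but it has a real gap at exactly the place you flag as ``the most technical step.'' You want a polyhedral bipartite quadrangulation of a high-genus surface whose $1$-skeleton is (close to) $K_{n,n}$ and whose faces are the squares themselves. For this to be a cube complex you correctly observe that each same-side pair may serve as a diagonal of at most one square, which caps the number of squares at $\binom{n}{2}$ and forces a bijection $\phi$ between left pairs and right pairs; but producing the bijection is the easy part. What you actually need is a rotation system on the $2n$ vertices whose induced faces are \emph{precisely} the $\binom{n}{2}$ prescribed quadrilaterals $F_P$, and you give no construction and no citation that delivers this. The appeal to Ringel--Jungerman does not help: minimum-genus quadrangular embeddings of $K_{n,n}$ have $n^2/2$ faces, which exceeds the number $\binom{n}{2}$ of available left pairs, so by pigeonhole two faces always share a same-side pair and those embeddings are never cube complexes for $n\geq 3$. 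Removing a matching brings the face count down to $\binom{n}{2}$, but the existence of a quadrangular embedding of $K_{n,n}-M$ in which the $\binom{n}{2}$ faces realize a bijection between left and right pairs (with compatible cyclic orders at every vertex) is a nontrivial design statement that is neither standard nor obviously true; ``a suitable cyclic combinatorial design on $\Z_n$'' is not a proof.

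By contrast, the paper avoids this design problem entirely. It builds a ribbon-graph embedding of a dense bipartite circulant, accepts that the resulting faces are long (possibly non-simple) polygons, and then subdivides the \emph{interiors} of those faces with $O(n)$ additional vertices in total (Lemma~\ref{lemma:cycles}). The shared-diagonal obstruction is dealt with not by a global bijection but by the local combinatorics of the face boundaries: the odd/even cycle dichotomy and property (I) ensure that the three-vertex pattern $j,?,k$ determines a unique face and position, so the diagonal $\{j,k\}$ belongs to at most one square of each parity. If you wanted to salvage your approach you would have to actually exhibit the rotation system; otherwise the argument as written does not establish the lemma.
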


\begin{remark}Taking the cartesian product of the cubulation of the above lemma with a $(d-2)$-dimensional torus with a standard cubulation we obtain a $d$-manifold with a number of facets which is quadratic in the number of vertices.
\end{remark}

Until the end of the proof of Theorem~\ref{theorem:main}, fix $M$ to be the surface given by Lemma~\ref{lemma:n-square-surface} and $Q$ the corresponding cubulation. We may assume that the number of facets of $Q$ is even, otherwise subdivide one of its squares in ten as shown in Figure~\ref{figure:constant-subdivisions}.

\begin{definition}[Canonical homological basis]
Let $M$ a closed orientable surface of genus $g$. A \emph{canonical homological basis} of $M$ is a set of simple closed curves $\alpha_1,\beta_1,\ldots,\alpha_g,\beta_g$ such that:
\begin{itemize}
\item Curves $\alpha_i, \beta_j$ intersect transversally at a single point if $i=j$, otherwise they are disjoint. Moreover, for $i \neq j$, curves $\alpha_i, \alpha_j$, and curves $\beta_i,\beta_j$ are disjoint.
\item The curves form a basis of the $\Z$-homology of $M$.
\end{itemize}
\end{definition}

\begin{lemma}
\label{lemma:short-basis}
There is a canonical homology basis of $M$ with each curve intersecting every edge of $Q$ at most twice.
\end{lemma}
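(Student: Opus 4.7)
The plan is to realize each curve in the canonical basis as a \emph{dual cycle} of $Q$ --- a simple closed curve transverse to the $1$-skeleton that, inside every square it enters, crosses from one edge of the square to another. A dual cycle automatically crosses each edge of $Q$ at most once, so it comes with some slack relative to the claimed bound of two; the extra room is useful because it allows curves to turn inside a square, or to traverse the same square along two different chords, without violating the edge-intersection count.

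As a first step, I would fix a spanning tree $T$ of the $1$-skeleton of $Q$ together with a spanning tree $T^*$ of the dual graph $Q^*$ that is disjoint from $T$ in the tree--cotree sense of Eppstein. The $2g$ edges of $Q$ that lie neither in $T$ nor dual to $T^*$ determine $2g$ homology classes spanning $H_1(M;\Z)$. Each such class is representable by a simple loop along $T$ closed up by one non-tree edge; pushing this loop slightly off the $1$-skeleton into the adjacent squares produces a closed curve transverse to $Q$ that crosses each edge of $Q$ at most twice, once from each side of the perturbation. This gives us a basis of $H_1(M;\Z)$ satisfying the edge-intersection bound, but the basis is not canonical.

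The heart of the argument is upgrading such a basis to a canonical one without losing the intersection control. A purely algebraic symplectic change of basis is insufficient: a $\Z$-linear combination of two curves, each meeting an edge at most twice, can easily meet the edge many more times. So I would instead build the canonical basis geometrically, exploiting the particular structure of the cubulation $Q$ supplied by the (postponed) proof of Lemma~\ref{lemma:n-square-surface}. The most natural constructions producing $\Omega(n^2)$ squares on $O(n)$ vertices are highly symmetric --- for example a quadrangular embedding of a graph like $K_{m,m}$, or a quotient of a regular planar grid by a lattice of identifications --- and in any such model $M$ decomposes into $g$ natural ``handles,'' each of which carries a short local pair $(\alpha_i,\beta_i)$ realized by two dual cycles meeting in a single square.

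The main obstacle will be the global compatibility of these $g$ local pairs: constructing one pair is easy, but distinct handles of $M$ can compete for the same edges and squares of $Q$, and a naive union of locally-defined pairs may force some shared edge of $Q$ to be crossed many times in total. I expect this to be handled either by invoking a symmetry group of the construction (building one pair $(\alpha_1,\beta_1)$ in a fundamental domain and propagating by the group action, so that disjointness for $i\neq j$ is automatic) or, in the less symmetric case, by a careful edge-by-edge surgery that modifies each candidate curve to avoid congested edges while preserving its homology class and keeping it a dual cycle. Either way, the verification that the resulting $2g$ dual cycles have the correct pairwise intersection pattern of a canonical basis should reduce to a local check near each prescribed intersection square.
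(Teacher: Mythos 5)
The heart of this lemma is producing a \emph{canonical} basis --- curves pairwise disjoint except that $\alpha_i$ and $\beta_i$ cross once transversally --- subject to the edge-intersection bound, and your proposal does not actually carry out that step. The tree--cotree construction followed by pushing the loops off the $1$-skeleton does produce $2g$ simple dual cycles spanning $H_1(M;\mathbb Z)$, each meeting every edge a bounded number of times, and you are right that a purely algebraic symplectic change of basis would destroy the edge bound, so the passage to a canonical basis must be geometric. But the two routes you then offer do not close the gap. The ``exploit symmetry of $Q$'' route presupposes features of a cubulation whose construction is postponed to Lemma~\ref{lemma:n-square-surface}, and which is not transparently a union of $g$ disjoint handles each with its own local $(\alpha_i,\beta_i)$; the lemma as stated should not, and in the paper does not, depend on such special structure. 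The alternative, ``edge-by-edge surgery modifying each candidate curve to avoid congested edges while preserving its homology class, the dual-cycle property, and the intersection pattern with all other curves simultaneously,'' is a restatement of the difficulty, not a sketch of its resolution: the pairwise intersections of your $2g$ tree--cotree loops can be arbitrary, and no mechanism is given for simultaneously reducing them to the canonical pattern without inflating edge crossings elsewhere.

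For comparison, the paper avoids this entirely by reducing to a known general theorem. It triangulates $M$ by cutting each square of $Q$ along a diagonal and then appeals to Buser and Sepp\"al\"a (the basis constructed after Lemma~3 of their paper), which, for \emph{any} triangulated closed orientable surface, yields a canonical homology basis whose curves meet each edge of the triangulation at most twice. That construction builds a triangulated fundamental polygon $X_0$, attaches one handle at a time by identifying two pairs of boundary edges, and at each step places $\alpha_{i+1},\beta_{i+1}$ hugging $\partial X_i$; the canonical intersection pattern is automatic from the order of attachment, and a curve hugging a boundary meets each interior edge at most twice, once near each of its endpoints. No special structure of $Q$ is used. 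To salvage your dual-cycle approach you would need a comparably concrete geometric construction --- e.g., a cut system or pants decomposition realized by dual cycles with controlled congestion --- producing the canonical basis directly, rather than an arbitrary basis plus a hoped-for upgrade.
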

\begin{proof}
Triangulate $M$ by subdividing each square of $Q$ with a diagonal. The basis constructed right after \cite[Lemma 3]{buser2003triangulations} has the required property -- each of its curves intersects each edge of the triangulation, and hence of $Q$, at most twice.

Let us describe the construction of the basis in a little bit more detail. In the proof of \cite[Lemma 3]{buser2003triangulations} a sequence $X_0,X_1,\ldots,X_g$ is constructed, where each $X_i$ is a triangulated surface of genus $i$ with a hole, and $X_{i+1}$ is obtained from $X_i$ by identifying two pairs $(u_i,u'_i)$ and $(v_i,v'_i)$ of edges in $\partial X_i$. Thus $X_0$ is a fundamental polygon of $M$ and $X_g=M$. Curves $\alpha_{i+1},\beta_{i+1}$ are chosen in $X_i$, very close to $\partial X_i$, and connecting $u_i$ to $u'_i$ and $v_i$ to $v'_i$, respectively. As the curves follow $\partial X_i$ they might intersect every edge in the interior of $X_i$ at most twice -- one time when they pass one endpoint of the edge and another time when they pass its another endpoint.
\end{proof}

\begin{lemma}
\label{lemma:short-skeleton-basis}
There is a subdivision $Q'$ of $Q$ with the following properties:
\begin{itemize}
\item $Q'$ has at most $O(n^4)$ squares.
\item Each edge of $Q$ is subdivided into an even number of edges.
\item $M$ has a canonical homology basis with each curve being an edge path in the $1$-skeleton of $Q'$ and not longer than $O(n^2)$ edges.
\item The neighborhood of every curve of the basis is regular. More precisely, for every curve $\alpha$ of the basis the subcomplex of squares intersecting it is $\alpha\times I_2$, where $I_2$ is a line segment subdivided with two edges. 
\end{itemize}
\end{lemma}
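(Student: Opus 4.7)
The plan is to realize the homology basis from Lemma~\ref{lemma:short-basis} as edge paths of a cube subdivision $Q'$ of $Q$. Since $Q$ is a cube complex on a closed orientable surface with $V(Q) = O(n)$ and $F(Q) = \Theta(n^2)$, Euler's formula gives $E(Q) = 2F(Q) = \Theta(n^2)$ and genus $g = \Theta(n^2)$. Thus each basis curve has at most $2E(Q) = O(n^2)$ arcs in the squares of $Q$, and the total arc count across all $2g$ curves is $O(n^4)$. By a small ambient isotopy, put the basis in generic position with respect to $Q$: the curves meet the $1$-skeleton transversally away from its vertices, and the only mutual intersections are the $g$ transverse double points $p_i = \alpha_i \cap \beta_i$, each placed in the interior of a distinct square of $Q$.

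Pick thin disjoint tubular neighborhoods $N(\alpha) \cong \alpha \times (-\epsilon, \epsilon)$ of the basis curves, pairwise disjoint except in a small disk around each $p_i$ where $N(\alpha_i)$ and $N(\beta_i)$ cross transversally. The aim is to cubulate $M$ so that every $N(\alpha)$ becomes a subcomplex isomorphic to $\alpha \times I_2$. For each edge $e$ of $Q$ and each crossing of $e$ with a basis curve, insert three new vertices on $e$ -- at the crossing itself and at both endpoints of the short segment $N(\alpha) \cap e$ -- and, if necessary, one more vertex so the number of pieces of $e$ is even; this adds $O(n^4)$ subdivision vertices on the edges of $Q$ in total. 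Inside each square $s$ of $Q$ with $k_s$ arcs, realize each arc as a short edge path of length $O(1)$ along the middle of a $2 \times O(1)$ cubical strip representing $N(\alpha) \cap s$, route the strips to be pairwise disjoint, and cubulate the complement of the strips into $O(k_s + 1)$ subsquares by a direct hand dissection. Around any $p_i \in s$, construct a small cubulated patch in which the four subsquares around $p_i$ are shared by both the $\alpha_i$- and $\beta_i$-strips.

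All four conditions of the lemma then follow: summing over squares gives $\sum_s O(k_s + 1) = O(n^4) + O(F(Q)) = O(n^4)$ subsquares; each edge of $Q$ is subdivided into an even number of pieces; every basis curve has $O(n^2)$ arcs contributing $O(1)$ edges each, hence length $O(n^2)$ in $Q'$; and by construction the subcomplex of subsquares meeting each $\alpha$ is exactly the cubical strip $\alpha \times I_2$.

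The main obstacle is the per-square cubulation: with $k_s$ potentially as large as $\Theta(n^2)$ and arbitrary non-crossing matchings of $2k_s$ points on $\partial s$, one must cubulate $s$ with only $O(k_s+1)$ cells in which every arc is a length-$O(1)$ edge path. A naive grid-based routing would cost $\Omega(k_s^2)$ cells per square and push the total count to $\Omega(n^6)$, and nested arc configurations seem to force some arcs to be long; the essential input is the freedom to choose the subdivision of $\partial s$ together with a careful case analysis of the non-crossing matching structure, so each arc is threaded as a short $L$- or $U$-shape. The local cubulation around each $p_i$ must simultaneously realize both $\alpha_i \times I_2$ and $\beta_i \times I_2$, requiring a carefully designed shared crossing patch.
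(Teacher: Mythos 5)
Your overall strategy -- take the short homology basis from Lemma~\ref{lemma:short-basis}, put it in general position with respect to $Q$, and subdivide so the curves become edge paths with regular neighborhoods while tracking the total arc count $O(n^4)$ -- is the right one, and your complexity bookkeeping at the level of ``one arc = $O(1)$ edges per $Q$-square'' is exactly what is needed. But the central difficulty is precisely the one you flag and leave open, and it is not a cosmetic gap: you must cubulate each $Q$-square containing $k_s$ non-crossing arcs using $O(k_s+1)$ cells \emph{while} keeping every arc of length $O(1)$ and realizing every arc neighborhood as a width-$2$ strip. For a rainbow of nested arcs with all endpoints on one side of the square, the outermost strip must enclose all inner ones, and it is far from clear that one can keep its length $O(1)$ rather than $\Theta(k_s)$; if arcs can have length $\Theta(k_s)$ inside a single square, a basis curve can end up with length far beyond $O(n^2)$, violating the third bullet of the lemma. ``Careful case analysis of the non-crossing matching'' is not a proof, and this is the entire substance of the lemma.

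The paper avoids per-square routing altogether and does not try to build $\alpha\times I_2$ directly. It takes the overlay of the $1$-skeleton of $Q$ with the basis curves as a \emph{polygonal} subdivision of $M$; since distinct curves only meet at the $g$ points $\alpha_i\cap\beta_i$, each arc becomes $O(1)$ edges of this arrangement inside each $Q$-square for free. It then (ii) bisects every edge so that every face has an even number of sides, (iii) cones each polygon from a new interior vertex, which turns an $m$-gon into $m/2$ quadrilaterals at no asymptotic cost, and (iv) insets a square in each quadrilateral to make the result an honest cube complex. This yields $O(n^4)$ squares and curves of length $O(n^2)$ by an Euler-characteristic count without any routing. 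The regularity of curve neighborhoods (your fourth bullet) is obtained as a \emph{separate} final step, by cutting $Q'$ along each intersecting pair $\alpha_i,\beta_i$ and re-gluing two cubulated ribbons, which adds only $O(n^4)$ more squares; trying to achieve regular neighborhoods on the fly inside each square, as you propose, compounds the routing problem. If you want to pursue your approach you would need a genuine lemma showing that any non-crossing matching of $2k$ boundary points of a square can be realized by disjoint width-$2$ cubical strips of length $O(1)$ each in a cubulation of the square with $O(k)$ cells, and I do not believe this is true as stated because of the nested case.
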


\begin{proof}

First, we subdivide $Q$ satisfying all the requirements except possibly the last one, see Figure~\ref{figure:short-basis}:

	\begin{figure}[ht]
	\center
	\includegraphics[width=1\linewidth]{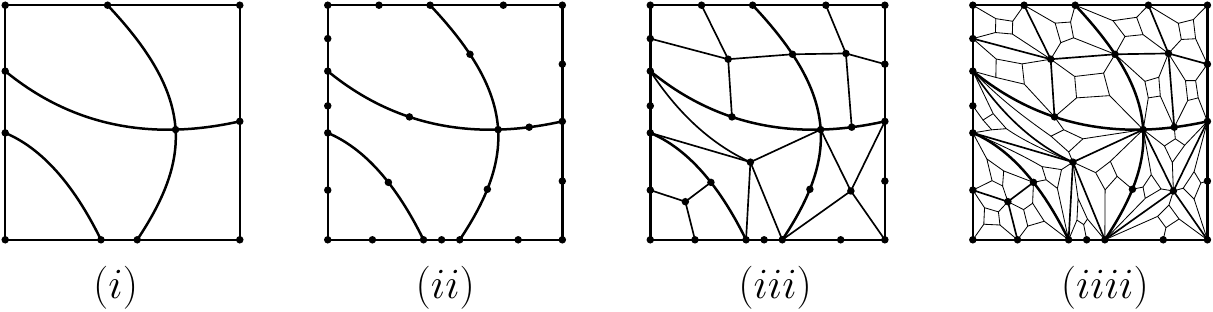}
	\caption{Proof of Lemma~\ref{lemma:short-skeleton-basis}.}
	\label{figure:short-basis}
	\end{figure}

\begin{itemize}
\item[$(i)$] We start with the subdivison given by the edges of $Q$ together with the curves of the basis given by Lemma~\ref{lemma:short-basis}.
\item[$(ii)$] Split every edge of this new subdivision in two. Now every face is a polygon with an even number of edges.
\item[$(iii)$] Split every polygon into squares by adding a new vertex inside of it. 
\item[$(iiii)$] Split every square in $5$ by insetting a square. This guarantees that the result is a cubical complex.
\end{itemize} 

It remains to check that $Q'$ satisfies the required properties. By construction, every edge of $Q$ is subdivided into an even number of edges.

The subdivision at step $(i)$ has at most $O(n^4)$ vertices - each of the $O(n^2)$ basis curves intersects each of the $O(n^2)$ edges of $Q$ at most twice. By the same reasoning it also has at most $O(n^4)$ edges. So, since the genus of $S$ is $O(n^2)$, then by Euler's formula this subdivision has at most $O(n^4)$ faces.

At step $(ii)$ we add at most $O(n^4)$ new edges and vertices. So, this subdivision also has at most $O(n^4)$ vertices, edges, and faces. 

By construction, the subdivision at step $(iii)$ also has at most $O(n^4)$ vertices - the sum of vertices and faces of step $(ii)$. All its faces are squares, so it has twice as many edges as faces. By the Euler formula, we get that it also has $O(n^4)$ edges and faces.

Finally, the subdivision at step $(iiii)$ has $5$ times as many squares, which again makes it $O(n^4)$.
The basis curves stopped changing after step $(ii)$ and each has at most $O(n^2)$ edges.

It remains to satisfy the last requirement. To do that, for every pair of intersecting curves in the basis, we cut the cubulation along this pair and glue back two cubulated ribbons instead, see Figure~\ref{figure:regularization}.
This operation adds $O(n^4)$ squares -- the number of curves, $O(n^2)$, times their length, also $O(n^2)$. Since different pairs of basis curves are disjoint, we can regularize their neighborhoods independently. Each original edge of $Q$ is still subdivided an even number of times.

    \begin{figure}[ht]
	\center
	\includegraphics[width=0.6\linewidth]{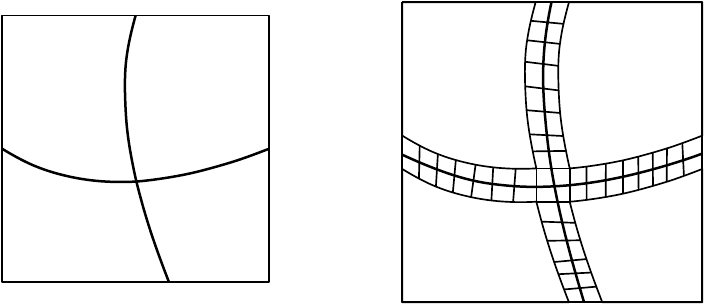}
	\caption{Regularizing neighborhoods of the basis curves.}
	\label{figure:regularization}
	\end{figure}

\end{proof}

The following result can be found in \cite[Theorem 2]{EPPSTEIN19993}.

\begin{lemma}
\label{lemma:fill-sphere}
Any cubulation of $S^2$ with an even number $n$ of squares is the boundary of a cubulation of the $3$-ball with $O(n)$ cubes.
\end{lemma}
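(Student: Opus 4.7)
The plan is to reduce the lemma to the structural result of Eppstein \cite[Theorem~2]{EPPSTEIN19993}, which is precisely this statement, and to sketch the underlying topological-combinatorial mechanism so that the reader sees why the bound is linear.

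The starting observation is the \emph{strip structure} of any quadrangulation of a surface: call two edges of a square \emph{opposite} if they share no vertex, and take the transitive closure of this relation on the edges of the given cubulation of $S^2$. Each equivalence class traces a simple closed curve on $S^2$ passing through the centers of a sequence of squares, and two distinct strip curves that share a square cross transversally at its center. In particular, every square is cut by exactly two strip curves meeting once in its interior, so the arrangement has total combinatorial complexity $O(n)$. This arrangement is the $2$-dimensional shadow of what we want to extend: the strip curves will bound a system of properly embedded disks in $B^3$, and their complementary regions will be the pieces we cube.

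Next I would use the parity hypothesis. The obstruction to extending the strip curves to a ``filling'' system of properly embedded disks in $B^3$, in general position, with only quadrilateral faces in the induced subdivision of the boundary, is a $\Z/2$-class that is classically known to vanish precisely when the number of squares on the boundary is even (this is the same parity appearing in the bipartite/$\Z/2$-homology characterization of cubical quadrangulations of $S^2$ that bound). Once such a disk system is in place, it cuts $B^3$ into $O(n)$ topological balls, each of which meets $\partial B^3=S^2$ in a combinatorially bounded patch of the original cubulation.

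Finally I would cube each piece with $O(1)$ cubes using the local templates of Figure~\ref{figure:constant-subdivisions}: the ``insetting a cube'' move and its variants are enough to finish each piece, matching both the boundary squares and the quadrilateral subdivisions induced on the interior disks. Summing over $O(n)$ pieces gives the claimed bound of $O(n)$ cubes, and the resulting complex is a genuine cube complex because the disks were in general position and the templates are chosen to agree on overlaps.

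The main obstacle is the second step — verifying that the even-parity hypothesis is exactly the $\Z/2$ obstruction to a coherent disk filling, and that the filling can be carried out with only a linear amount of combinatorial complexity. This is the nontrivial content of \cite[Theorem~2]{EPPSTEIN19993}, and rather than reproducing the topological argument I would cite it directly; the strip decomposition and the local cubulation templates above are the parts that make the linear bound transparent.
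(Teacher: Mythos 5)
Your proposal is correct and takes essentially the same route as the paper, which simply cites Eppstein \cite[Theorem~2]{EPPSTEIN19993} for this lemma without reproving it. Your accompanying sketch of the strip-curve/dual-surface mechanism is reasonable background, though it is closer in spirit to Mitchell's and Thurston's parity argument than to the buffer-layer construction Eppstein actually uses to obtain the linear bound; since you explicitly defer the substance to the citation, this does not affect correctness.
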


\subsection{Proof of Theorem~\ref{theorem:main} for $d=3$.}

Let $I_k$ be the cubulation of the interval $[a,b]$ with $k$ edges.
We begin with the standard embedding $M\times [a,b]\to S^3$ cubulated as $Q\times I_k$ (see Figure~\ref{figure:handlebody}) (an embedding $M\to S^3$ is \emph{standard} if it is a connected sum of standardly embedded and unlinked tori and an embedding $M\times [a,b]\to S^3$ is \emph{standard} if its restriction to $M\times\{a\}$ is standard).

	\begin{figure}[ht]
	\center
	\includegraphics[width=0.80\linewidth]{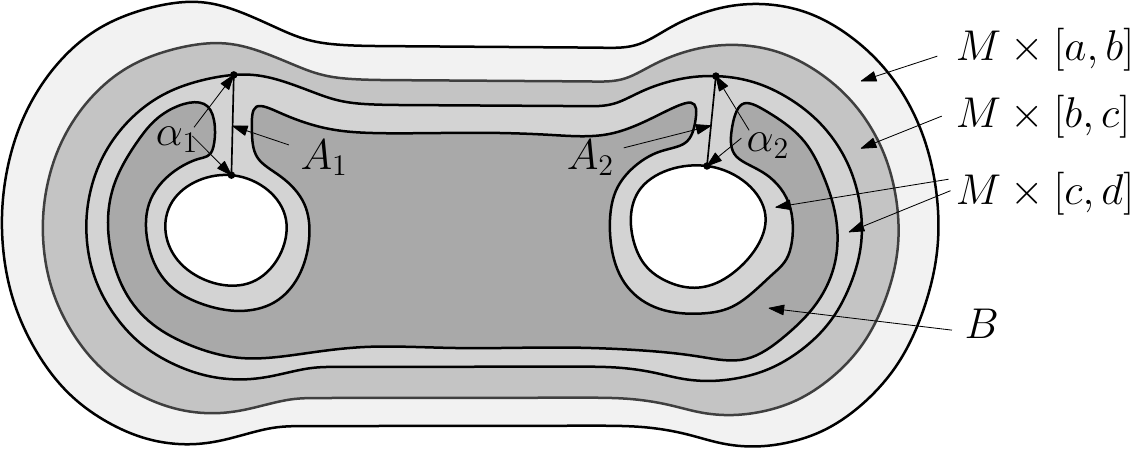}
	\caption{Proof of Theorem~\ref{theorem:main}. }
	\label{figure:handlebody}
	\end{figure}

Now we need to cubulate the two connected components of $S^3 \setminus M\times [a,b]$.

Each of them is handlebody which we will cubulate using the same strategy, so we only describe in detail how we cubulate the component bounded by $M\times \{b\}$.

We start by gluing a \emph{refining cylinder} $M\times [b,c]$ to $M\times \{b\}$. Its ends $M\times \{b\}$ and $M\times \{c\}$ are cubulated as $Q$ and $Q'$, respectively. To be more precise, a small fraction (at most one square in each  square of the original cubulation $Q$) of the squares of $M\times \{c\}$ may be subdivided compared to $Q'$, but this does not affect the properties of $Q'$ given by Lemma~\ref{lemma:short-skeleton-basis} and we keep calling this cubulation $Q'$. The interior of the refining cylinder is cubulated using at most $O(n^4)$ cubes. 

Finally, we cubulate the handlebody bounded by $M\times \{c\}$ using again at most $O(n^4)$ cubes.

We  will describe the cubulations of the refining cylinder and the handle body later.

After cubulating both connected components of $S^3 \setminus M\times [a,b]$, the resulting sphere has at most $k\cdot O(n) + O(n^{4})$ vertices and at least $k\cdot \Omega(n^2)$ cubes. (Here $k\cdot O(n)$ is the number of vertices in $Q\times I_k$, $O(n^{4})$ is the number of cubes, and hence an upper bound on the number of vertices in the handlebodies, and $k\cdot \Omega(n^2)$ is the number of cubes in $Q\times I_k$). Choosing $k=n^{3}$ we get the required number of vertices and cubes.


{\bf Refining cylinders.} We need to cubulate the cylinder $M\times [b,c]$ so that one of its ends is cubulated as $Q$ and the other as $Q'$.
We start with the cubulation $Q\times [b,c]$ with $[b,c]$ cubulated using a single edge.

As $Q$ is a cubulation, its $1$-skeleton is bipartite. Pick one side of this bipartite graph and for each vertex $v$ in it split the edge $v\times [b,c]$ into two. Now, for every square $F$ in $Q$ the boundary of every side face of $F\times [b,c]$ is a polygon with an even number of edges (recall, $Q'$ subdivides every edge of $Q$ into an even number of edges). So, we can subdivide every side face of $F\times [b,c]$ into squares by adding a vertex in its center. We then subdivide every square into $5$ by insetting a new square, to ensure that we have a cubical complex, see Figure~\ref{figure:refining}. 

	\begin{figure}[ht]
	\center
	\includegraphics[width=0.70\linewidth]{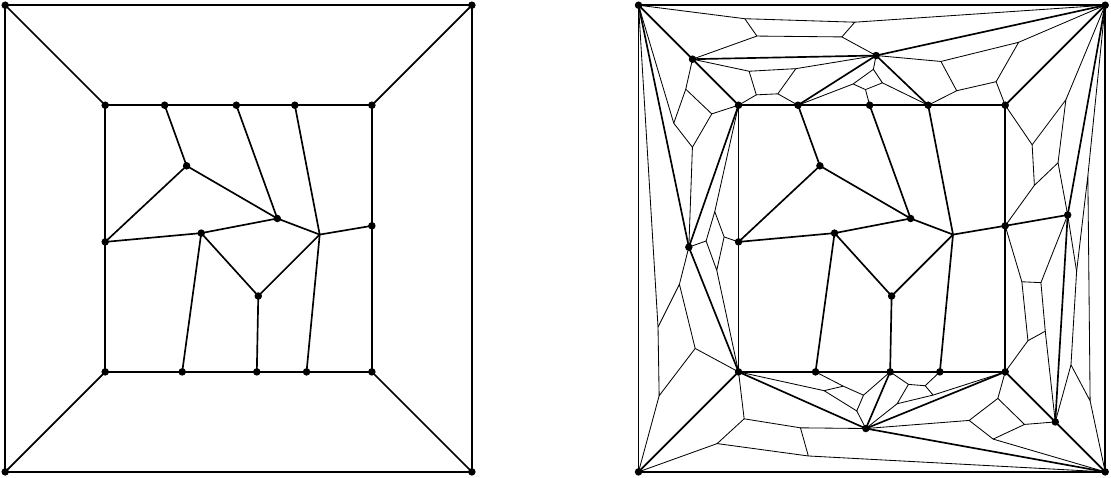}
	\caption{Subdivision of the boundary of $F\times [b,c]$. The outer face on the picture corresponds to $F\times \{b\}$ and is not subdivided, the inner square is subdivided according to $Q'$. Note, that exactly $2$ (top left and bottom right) out of $4$ edges connecting the inner and the outer squares are split in half.}
	\label{figure:refining}
	\end{figure}

Now, consider the boundary complex of every $F\times [b,c]$. If it has an odd number of squares then split one of the squares in $F\times \{c\}$ into an even number of squares as shown in Figure~\ref{figure:constant-subdivisions}. This changes the subdivision $Q'$, but it keeps the properties given by Lemma~\ref{lemma:short-skeleton-basis}, as we do this operation at most once for each $F$.

Now the boundary of each $F\times [b,c]$ is subdivided into an even number of squares.
By Lemma~\ref{lemma:fill-sphere} we can fill this boundary by a cubulated $3$-ball. To ensure that we still get a cubical complex, we cubulate each cube into $7$ smaller cubes by insetting a cube inside of it as shown in Figure~\ref{figure:constant-subdivisions}. The number of cubes we use is at most the number of squares in the boundary of $F\times [b,c]$ times a constant. Note, that by construction the number of squares in the boundary of $F\times [b,c]$ is at most a constant times the number of squares in $F\times \{c\}$, and hence the total number of cubes we need to cubulate the whole refining cylinder is at most the number of squares in $Q'$, which is $O(n^4)$.

Finally, note that since the number of squares in the boundary of each $F\times [b,c]$ is even, then the number of squares in $M\times \{b\}$ and $M\times \{c\}$, i.e., in $Q$ and $Q'$ (which we have changed sightly) have the same parity. Since $Q$ has an even number of squares, so does $Q'$.

{\bf Handlebody.}
Let $\alpha_1,\beta_1,\ldots,\alpha_g,\beta_g$ be the canonical homology basis of $M$ given by Lemma~\ref{lemma:short-skeleton-basis}. We may assume that $\alpha_1,\ldots,\alpha_g$ are the standard meridians and $\beta_1,\ldots,\beta_g$ are the standard parallels of the standardly embedded $M$. Otherwise, we can just use a self-isotopy of $M$ before starting the whole construction.

For each $i$, let $A_i$ be a polygon with an even number $|\alpha_i|\leq O(n^2)$ of edges.
As before, we subdivide it into $|\alpha_i|/2$ squares by adding a vertex in the interior and then subdivide every square in $5$. A cube complex $Q'_{plus\,disks}$ is obtained from $Q'$ by gluing $A_i$ to each $\alpha_i$. This complex cubulates $M_{plus\,disks}:=M\times \{c\}\cup\underset{i}{\bigcup}A_i$.
A cubulation $Q'_{sphere}$ of the $2$-sphere $S^2$ is obtained from $Q'$ by first cutting $Q'$ along each $\alpha_i$ and then gluing a copy of $A_i$ to each of the resulting boundary components.

The cylinder $S^2\times [c,d]$ is cubulated as $Q'_{sphere}\times [c,d]$, where $[c,d]$ is cubulated with a single edge. Now we glue one of its ends, $S^2\times\{c\}$, to $M_{{plus\,disks}}$ by the map identifying the two copies of $A_i$ in $Q'_{sphere}\times\{c\}$ for each $i$. This produces a cube complex, since the basis curves $\alpha_1,\ldots,\alpha_g$ had pariwise disjoint regular neighborhoods in $Q'$. 

The other end, $S^2\times\{d\}$, we fill with a cubulated $3$-ball $B$ using Lemma~\ref{lemma:fill-sphere}, which finishes the cubulating of the handlebody. We can apply the lemma since $Q'$, and hence $Q'_{sphere}\times\{d\}$, have an even number of squares. (This time we do not need to subdivide the cubes of $B$ into $7$ cubes, as each cube of the cylinder $Q'_{sphere}\times [c,d]$ intersects the boundary of the ball $B$ in a single square.)

The number of squares in $Q'_{sphere}$ is at most $O(n^{4})$ - at most $O(n^2)$ squares in each of $O(n^2)$ disks $A_i$ and at most $O(n^{4})$ squares in $Q'$. So, the number of cubes in $Q'_{sphere}\times [c,d]$ and in the cubulation of the ball $B$, and hence in the whole handlebody is at most $O(n^{4})$.

\section{Proof of Lemma~\ref{lemma:n-square-surface}.}

Let $G\to M$ be an embedding of a bipartite graph $G$ into a closed orientable surface such that every connected component of the complement $M\setminus G$ to $G$ is an open disk. The boundary of the closure of every such disk is a cycle in $G$. Note that these cycles may not be simple, however, orientations on the disks give a natural order of the vertices on the cycles.

The following lemma gives a sufficient condition on the cycles which allows us to refine the embedding $G\to M$ to a cubulation without adding too many new vertices:

\begin{lemma}
\label{lemma:cycles}
For any $c>0$ there is $C>0$ such that for any bipartite graph $G$, closed orientable surface $M$, and embedding $G\to M$, the embedding can be refined to a cube complex with at most $|V(G)|\cdot (1+C)$ vertices provided that all the connected components of $M\setminus G$ are open disks and the corresponding cycles in $G$ satisfy the following properties:

\begin{itemize}
    \item [(I)] The set of cycles can be partitioned into ``odd'' and ``even'', so that for any vertices $j,k$ a sequence of three consecutive vertices of the form $j,?,k$ appears at most once in at most one even cycle, and at most once in at most one odd cycle. Here $?$ is a substitute for an arbitrary vertex, i.e., if $j,i_1,k$ appears in an even cycle, then $j,i_2,k$ cannot appear in another place in this cycle or any other even cycle, even if $i_1\neq i_2$.

    \item [(II)] Every path of length at most $10$ in any cycle is simple, i.e., it does not visit any vertex of $G$ more than once.
    
    \item [(III)] The cycles can be split into simple paths, the total number of paths is at most $c|V(G)|$.
\end{itemize}
\end{lemma}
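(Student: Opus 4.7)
The plan is to refine each face $f$ of the embedded graph by adding a single interior ``center'' vertex $c_f$ and drawing interior chords from $c_f$ to every second vertex around the boundary cycle of $f$, splitting the face into $l_f/2$ quadrilaterals of the form $(c_f, v_{i-1}, v_i, v_{i+1})$. The choice of ``every second vertex'' is dictated by the parity class of $f$ from condition~(I): even faces connect $c_f$ to all boundary vertices in the bipartite class $A$, and odd faces connect to all boundary vertices in class $B$. Since every cycle has even length, this produces a decomposition into combinatorial quadrilaterals with two ``x-corners'' in the chosen class and one ``y-corner'' in the other.

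The main task is to verify that the result is a bona fide cube complex, i.e.\ that no two distinct quadrilaterals share a pair of non-adjacent corners and no two are joined by a multi-edge. A quadrilateral has two diagonals: the ``x-diagonal'' $\{v_{i-1}, v_{i+1}\}$ and the ``y-diagonal'' $\{c_f, v_i\}$. The parity convention together with condition~(I) handles all x-diagonal sharings: because even and odd faces connect to opposite bipartite classes, any given pair of x-corners can only arise as the x-diagonal of a quadrilateral in one parity class; and within each parity class, condition~(I) ensures the underlying triple $v_{i-1}, ?, v_{i+1}$ appears at most once. Condition~(II) additionally guarantees that within any length-$10$ stretch of a cycle no vertex is repeated, which rules out all local degeneracies coming from short-range repeats (such as a quadrilateral collapsing because $v_{i-1}=v_{i+1}$).

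The remaining obstruction is a repeat vertex $u$ in a single face's boundary cycle. If $u$ lies in the center-connected class (an ``x-repeat''), the two chords from $c_f$ to the two occurrences of $u$ form a multi-edge; if $u$ lies in the other class (a ``y-repeat''), the two quadrilaterals carrying $u$ as a y-corner share the y-diagonal $\{c_f, u\}$. Both local defects are cured by inserting a small cubical patch near the repeat---for example, by insetting (see Figure~\ref{figure:constant-subdivisions}) the two offending quadrilaterals, which adds a constant number of new vertices and breaks the offending diagonal or multi-edge. By condition~(II), the two occurrences of $u$ lie at cyclic distance at least $11$, so the patches for distinct repeats never interfere with one another nor with the unaltered cells in neighboring faces.

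Finally we count vertices. The face centers contribute at most $F \le \sum_f p_f \le c\,|V(G)|$ new vertices by condition~(III), since every face needs at least one simple path in its decomposition. A repeat vertex of a cycle forces a break in the simple-path decomposition, so the total number of repeats across all faces is also at most $\sum_f p_f \le c\,|V(G)|$; each contributes $O(1)$ new vertices from its patch. Summing, we add at most $C\,|V(G)|$ new vertices for some constant $C$ depending only on $c$, as required. The main technical obstacle is checking that the local patches are genuinely local---that each patch removes its intended multi-edge or shared diagonal and does not create any new conflict with adjacent cells (either in the same face or across a shared graph edge with an adjacent face); this is precisely the role of condition~(II), whose length-$10$ simplicity keeps all patches safely separated.
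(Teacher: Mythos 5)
Your plan is a genuinely different route from the paper's. The paper first splits each boundary cycle into \emph{simple} subcycles by adding a cycle-center connected to the junction points of the path decomposition from (III), and only then quadrulates each simple subcycle with its own center; because each subcycle is simple, no multi-edges or intra-subcycle diagonal collisions ever arise, and (I) is only invoked for collisions \emph{between} subcycles. You instead quadrulate each whole (possibly non-simple) face cycle with a single center and repair repeats afterwards. Your parity/x-diagonal argument and your vertex count via (III) are correct and essentially match the paper's.

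The gap is in the repair step, specifically for what you call the ``x-repeat'': a vertex $u$ in the center-connected class appearing twice in the boundary of a face $f$ produces two distinct chords $c_f\,u$, i.e.\ a multi-edge in the $1$-skeleton. You claim this is ``cured\ldots\ for example, by insetting\ldots\ the two offending quadrilaterals,'' but insetting a square (Figure~\ref{figure:constant-subdivisions}) leaves the four boundary edges and four corners of that square untouched; it only subdivides the interior. Since both copies of $c_f\,u$ are boundary edges of the quadrilaterals you propose to inset, insetting removes neither of them, and the multi-edge persists. Repairing a multi-edge requires a move that actually subdivides or splits one of the two parallel edges (which then turns its two adjacent quadrilaterals into pentagons and forces a nontrivial further re-quadrulation), and your proposal does not supply such a move nor verify that it stays local and cubical. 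So the ``x-repeat'' case of your construction is not resolved. This is precisely the difficulty the paper's two-level subcycle decomposition is designed to sidestep: by making each quadrulated region simple before adding its center, no two chords from a center ever hit the same vertex twice, so multi-edges never appear and only diagonal collisions between distinct subcycles remain, which condition~(I) forbids.
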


\begin{proof}
First, we describe how we split the open disks in $M\setminus G$ into squares and check that we do not add too many new vertices. Afterwards, we check that the splitting is a cube complex. Let us call the parts of the bipartite graph $G$ \emph{left} and \emph{right}. Denote $n=|V(G)|$.

Consider an odd cycle $S$ in $G$, it is a concatenation $S=P_1+ P_2\ldots+ P_k$ of several simple paths $P_1,\ldots,P_k$ given by property (III). For each $P_i$, let $P'_i\subseteq P_i$ be its longest subpath starting and ending at a right vertex, i.e., at a vertex from the right half of $G$. Clearly, $|P_i|-|P'_i|\leq 2$. Let $Q_i$ be the gap between $P'_i$ and $P'_{i+1}$, i.e., the path in $S$ between the end of $P'_i$ and the start of $P'_{i+1}$. The endpoints of $Q_i$ are right and $|Q_i|\leq 2$, the latter meaning that $Q_i$ is simple by property (II). So, we have represented $S$ as a concatenation $S=P'_1+Q_1+P'_2+Q_2+\ldots + P'_k+Q_k$ of simple paths, each starting and ending at a right vertex. Do this for every odd cycle.

Likewise, we split every even cycle into simple paths, this time starting and ending at a left vertex. By construction, the total number of paths is at most $2cn$, twice the number of paths given by property (III).

Add a vertex in the interior of the connected component of $M\setminus G$ corresponding to each cycle and connect it to the endpoints of the paths in this cycle. This splits the cycle into smaller simple parts which we call \emph{subcycles}, see Figure~\ref{figure:cycle-cub} (left).

For every subcycle, add a vertex in the interior of the disk it bounds. For even subcycles, connect this vertex to all the left vertices in the boundary of the subcycle. For odd subcycles, connect it to all the right vertices in the boundary of the subcycle. This creates one square with vertices at the ends of a path; subdivide this square in $5$, see Figure~\ref{figure:cycle-cub}.

    \begin{figure}[ht]
	\center
	\includegraphics[width=.95\linewidth]{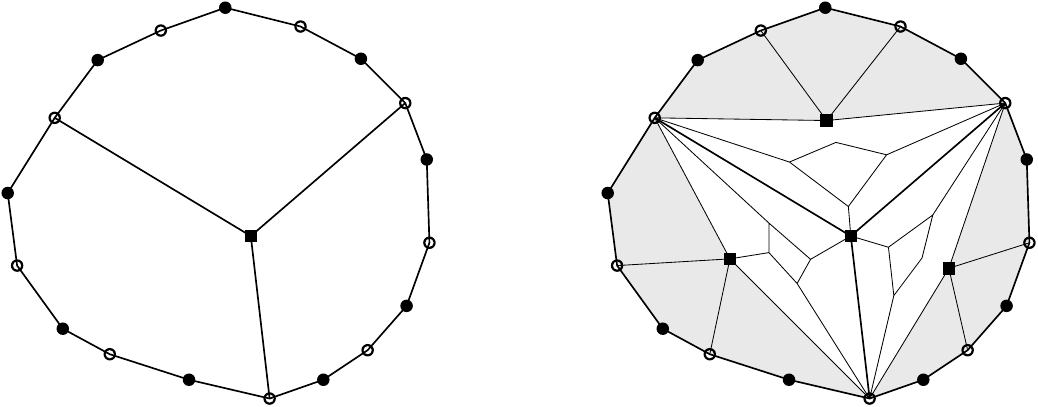}
	\caption{Cubulating an odd cycle consisting of $3$ simple paths starting and ending in right vertices of $G$.  The cycle is first split into $3$ simple subcycles (left), each of them then split into, in this case, $4$ squares (right). The squares which contain both endpoints of one of the three paths are split in $5$. Left vertices of $G$ are black and right are white.}
	\label{figure:cycle-cub}
	\end{figure}

We add $1$ vertex per cycle plus $5$ vertices per subcycle, which amounts to less than $6$ vertices per subscycle. Because the number of subcycles is at most $2cn$, the total number of vertices we add is in $O(n)$.

It remains to check that the intersection of any two squares is a face of both.
The $1$-skeleton of the constructed splitting of $M$ into squares is bipartite.
So, if the intersection of two squares is not a face of one of them, it means that the squares share a diagonal.

It is clear that we only need to consider squares which have three vertices on the boundary of their respective cycle - the gray squares in Figure~\ref{figure:cycle-cub}. It is also clear that since subcycles are simple, two squares resulting from the cubulation of the same subcycle cannot share a diagonal.

So, it remains to consider two squares coming from cubulations of two different subcycles (possibly, but not necessarily, originating from different cycles). Such two squares differ in the vertices lying in the interior of their respective subcycle (black square vertices in Figure~\ref{figure:cycle-cub}). So, they could only share the diagonal with both vertices on the boundary of their respective cycle. 

However, this diagonal uniquely determines the square, so two squares cannot share it. Indeed, this follows by property (I) and because the diagonal determines the parity of the subcycle and hence the cycle to which the square belongs -- even, if its vertices are left and odd, if they are right.

\end{proof}

To prove Lemma~\ref{lemma:n-square-surface}, we construct a bipartite graphs $G$ with $2n$ vertices and $\Omega(n^2)$ edges together with embeddings $G\to M$ satisfying the properties in the hypothesis of Lemma~\ref{lemma:cycles}. By the lemma, it gives us cubulations of $M$ with $O(n)$ vertices. The number of squares will be $\Omega(n^2)$, since it is half the number of edges.

{\bf Construction of the graph $G$.}

Let $G$ be a bipartite graph with an odd prime number $n$ of vertices in each part.
We denote the vertices in each part by the numbers $1,\ldots,n$, so the $i$th vertices in either component is denoted by $i$. This should not cause confusion.

We think of the numbers $1,\ldots,n$ as situated in a circle in \emph{counterclockwise} order. Operations $+$, resp. $-$, mean moving in counterclockwise, resp. clockwise, order. Vertices $i,j$ are said to be in counterclockwise order if $j=i+d$ for some $1\leq d < n/2$.

Take $d_{max}$ to be the largest odd number smaller than $n/10$. For each $i$, vertex $i$ is connected to $i-d_{max}, i-d_{max}+1,\ldots,i-1,i+1,\ldots, i+d_{max}$, i.e., to all the vertices $i\pm d$ with $1\leq d\leq d_{max}$. Hence, $G$ has $\Omega(n^2)$ edges.

{\bf Construction of the surface.}

We take a $2$-disk for each of the vertices of $G$ ($2n$ disks in total), and for each edge of $G$ glue a strip connecting the corresponding pairs of disks.

The strips are glued to the disks in the following way: when we travel counterclockwise around the boundary of the disk corresponding to vertex $i$ we encounter the strips in the following order: $1,-1,2,-2,3,\ldots,d_{max}, -d_{max}$, see Figure~\ref{figure:vertex-star}~(left).

    \begin{figure}[ht]
	\center
	\includegraphics[width=1.0\linewidth]{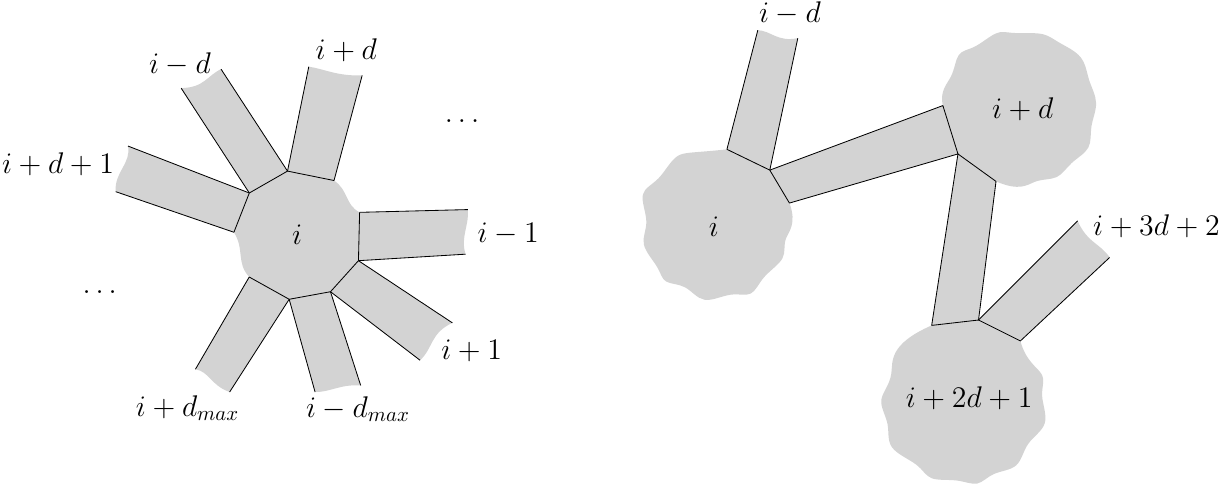}
	\caption{Neighborhood of the disk corresponding to vertex $i$ in $M_0$ (left). Part of $M_0$ (right).}
	\label{figure:vertex-star}
	\end{figure}

What we have now is a $2$-dimensional surface with holes $M_0$, see Figure~\ref{figure:vertex-star}~(right). The surface is orientable since none of the strips are twisted. Glue a disk to each boundary component of the surface to obtain a closed surface $M$. The graph $G$ separates $M$ into polygons glued together along their edges. Each polygon corresponds to a cycle in $G$ and to a boundary component of $M_0$. The cycle might not be simple, which plays some role in the sequel.

{\bf Odd and even cycles.}

Let us describe the boundary components of $M_0$. For a given vertex $i$, the  strip going to any other vertex has \emph{right} and \emph{left} sides with respect to $i$ - going around the disk corresponding to $i$ we encounter the \emph{right} side first. Note, that each side of the strip is right with respect to one of its ends and left with respect to another.

Consider now an arbitrary boundary component of $M_0$. It contains a part of the boundary of the strip going from $i-d$ to $i$ for some $i$ and $d$. There are two possibilities:

Case $1$: the boundary component arrives to $i$ along the right (with respect to $i$) side of the strip, Figure~\ref{figure:even-cycle}.
In this case it then goes to $i+d$ and again arrives along the right side of the strip, and so on. So, it visits the vertices \[i-d, i+d, i+2d, i+3d, \ldots \] We call such boundary component and the corresponding cycle in the graph \emph{even} (because the difference between consecutive vertices of this cycle in the same component of $G$ is even, $2d$).

    \begin{figure}[ht]
	\center
	\includegraphics[width=1.0\linewidth]{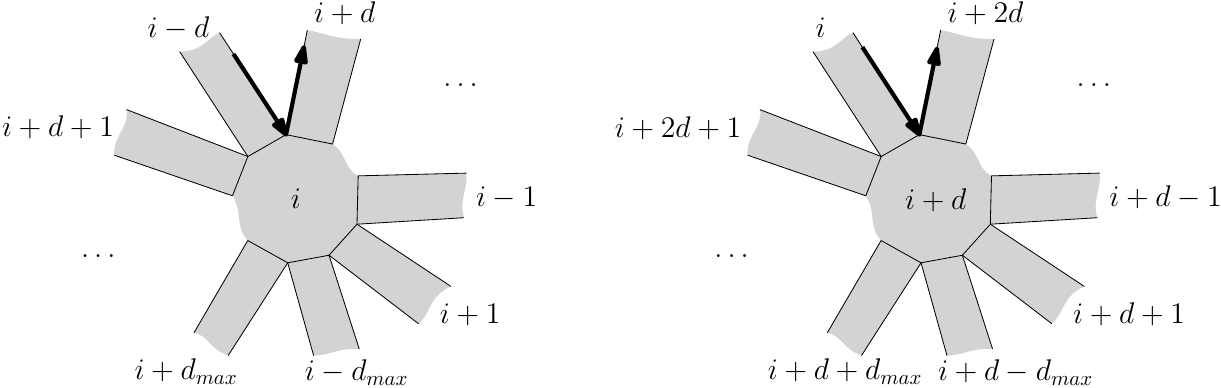}
	\caption{Even cycle.}
	\label{figure:even-cycle}
	\end{figure}

Case $2$: the boundary component arrives to $i$ along the left (with respect to $i$) side of the strip, Figure~\ref{figure:odd-cycle}.
In this case it then goes to $i+d+1$ and again arrives along the left side of the strip, and so on. So, it visits the vertices \[i-d, i+(d+1), i+(d+1)+(d+2), i+(d+1)+(d+2)+(d+3), \ldots \] We call such boundary component and the corresponding cycle in the graph \emph{odd}, as most of the gaps between consecutive vertices in the same component of $G$ are odd. Note, that because $1$ follows $d_{max}$ on the disks boundaries, at some point an odd boundary component might go through vertices $\ldots j, j+d_{max}, j+d_{max}+1, j+d_{max}+1+2 \ldots$ - this is the only case when the gap between consecutive vertices in the same component of $G$ in an odd cycle is even, $d_{max}+1$.

    \begin{figure}[ht]
	\center
	\includegraphics[width=1.0\linewidth]{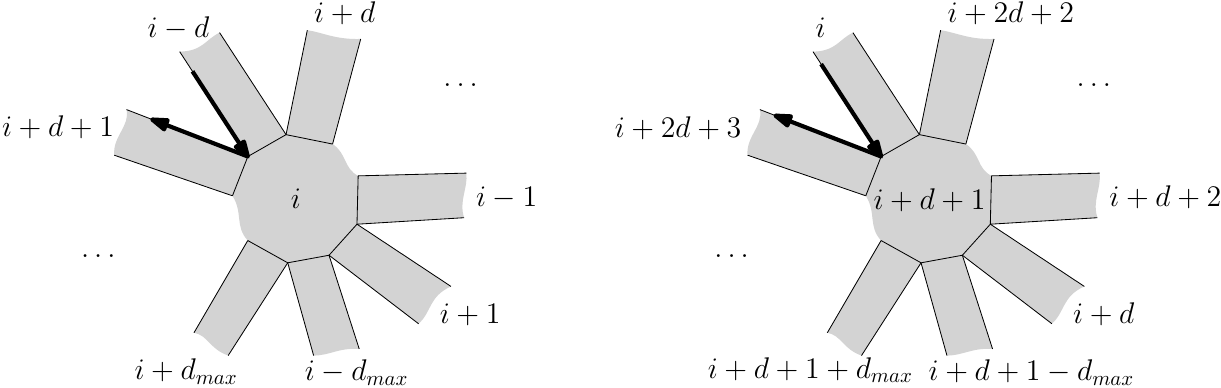}
	\caption{Odd cycle.}
	\label{figure:odd-cycle}
	\end{figure}

{\bf Checking properties (I), (II), (III).}

For property (I), we need to prove that for any vertices $j,k$ the sequence of three vertices of the form $j,?,k$ may occur at most once among all even cycles and at most once among all odd cycles.

If the cycle is even, then we immediately know that $k=j+2d$ for some $d$ and hence $?=j+d$. The only time when we encounter $j,j+d,j+2d$ in an even cycle is when we go first along the right side of the strip $j,j+d$ then along the left side of the strip $j+d,j+2d$, both times with respect to $j+d$, see Figure~\ref{figure:even-cycle} ($i$ in the figure is $j+d$). This uniquely determines a connected component of $\partial M_0$, and hence both the cycle and the part of it where we encounter the sequence.

If the cycle is odd, then there are two possibilities. If $k-j$ is odd, meaning that $k-j=2d+1$ for some $1\leq d < d_{max}$, then we know that $?=j+d$ and $k=j+d+(d+1)$; and if $k-j$ is even, then $?=j+d_{max}$ and $k=j+d_{max}+1$. As before, we can only encounter any of these two sequences once among all odd cycles, see Figure~\ref{figure:odd-cycle} ($i$ in the figure is $j+d$). 

For property (II), note that a vertex cannot appear twice in a cycle in less than $n/d_{max}>10$ steps.

For property (III), note that every edge of $G$ belongs to $2$ cycles, so the total length of the cycles is $O(n^2)$. So, to satisfy property (III) it is enough to split the cycles into simple paths of length $\Omega(n)$.

Every even cycle has length $2n$ and is already simple. Indeed, it has the form $i-d, i+d, i+2d, i+3d, \ldots$ for some $i,d$. In $n$ steps this sequence arrives to $i+nd=i$, which, however, lies in the different part of the bipartite graph $G$ compared to the starting $i$, because the number of steps $n$ is odd. So, to return to the starting $i$ the cycle needs $2n$ steps and since $n$ and $d$ are coprime, the cycle is simple.

It remains to deal with the odd cycles.
Consider an odd cycle. For each vertex write down the difference between it and the previous one. We get an (almost) monotone sequence of numbers of the form
\[
\ldots, d, d+1,\ldots,d_{max},1,2,3,\ldots,d_{max},1,2,3,\ldots.
\]
On one hand this sequence must repeat with a period equal to the length of the cycle, on the other hand it repeats every $d_{max}$ steps. Which means that our odd cycle can be split into paths of the form:
\[
i, i+1, i+2, \ldots, i+d_{max}.
\]
The length of each of these paths is $d_{max}\in\Omega(n)$ and they are all simple. The latter follows because for any $1\leq d\leq d+c\leq d_{max}$ and a vertex $j$, the vertices $j$ and $j+d+(d+1)+\ldots+(d+c)$ are different, since $d+(d+1)+\ldots+(d+c)=\frac{(c+1)(2d+c)}{2}$ is not divisible by $n$.

\bibliography{cubic-sphere}
\bibliographystyle{abbrv}

\end{document}